\newcommand{\onto}{\rightarrow\!\!\rightarrow}
\newcommand{\Gal}{\mathop{\mathrm{Gal}}}
\newcommand{\Ch}{\mathop{\mathrm{Ch}}\nolimits}
\newcommand{\res}{\mathop{\mathrm{res}}\nolimits}
\newcommand{\cores}{\mathop{\mathrm{cor}}\nolimits}
\newcommand{\mult}{\operatorname{mult}}
\newcommand{\Char}{\mathop{\mathrm{char}}\nolimits}
\newcommand{\Z}{\mathbb{Z}}
\newcommand{\F}{\mathbb{F}}
\newcommand{\Spec}{\operatorname{Spec}}
\newcommand{\End}{\operatorname{End}}
\newcommand{\Hom}{\operatorname{Hom}}
\newcommand{\compose}{\circ}
\newcommand{\CM}{\operatorname{CM}}
\newcommand{\C}{\operatorname{C}}
\newcommand{\corr}{\rightsquigarrow}
\renewcommand{\phi}{\varphi}
\newcommand{\RatM}{\dashrightarrow}
\renewcommand{\D}{\mathcal{D}}
\newcommand{\cT}{\mathcal{D}}
\begin{document}

\title*{Upper motives of outer algebraic groups}
\author{Nikita A. Karpenko}
\institute{
UPMC Univ Paris 06,
Institut de Math\'ematiques de Jussieu,
Paris,
France\\
\email{karpenko {\it at} math.jussieu.fr}
}
%
%
\maketitle

\abstract{
Let $G$ be a semisimple affine algebraic group over a field $F$.
Assuming that $G$ becomes of inner type over some finite field extension of $F$
of degree a power of a prime $p$,
we investigate the structure of the
Chow motives with coefficients in a finite field of characteristic $p$ of the projective
$G$-homogeneous varieties.
The complete motivic decomposition of any such variety contains one specific summand, which is the most
understandable among the others and which we call the {\em upper} indecomposable summand of the variety.
We show that every indecomposable motivic summand of any projective $G$-homogeneous variety is isomorphic
to a shift of the upper summand of some (other) projective $G$-homogeneous variety.
This result is already known (and has applications) in the case of $G$ of inner type and
is new for $G$ of outer type (over $F$).
\keywords
{Algebraic groups,
projective homogeneous varieties,
Chow groups and motives.\\
{\em 2000 Mathematical Subject Classifications:}
14L17; 14C25}
}

\abstract*{
Let G be a semisimple affine algebraic group over a field F.
Assuming that G becomes of inner type over some finite field extension of F
of degree a power of a prime p,
we investigate the structure of the
Chow motives with coefficients in a finite field of characteristic p of the projective
G-homogeneous varieties.
The complete motivic decomposition of any such variety contains one specific summand, which is the most
understandable among the others and which we call the upper indecomposable summand of the variety.
We show that every indecomposable motivic summand of any projective G-homogeneous variety is isomorphic
to a shift of the upper summand of some (other) projective G-homogeneous variety.
This result is already known (and has applications) in the case of G of inner type and
is new for G of outer type (over F).
}

\section
{Introduction}
\label{Introduction}

We fix an arbitrary base field $F$.
Besides of that, we fix a finite field $\F$ and we consider the
Grothendieck Chow motives over $F$
with coefficients in $\F$.
These are the objects of the category $\CM(F,\F)$, defined as in \cite{EKM}.


Let $G$ be a semisimple affine algebraic group over $F$.
According to \cite[Corollary 35(4)]{MR2264459}
(see also Corollary \ref{cor22} here), the motive of any projective $G$-homogeneous
variety decomposes (and in a unique way) into a finite direct sum of indecomposable motives.
One would like to describe the indecomposable motives which appear this way.
In this paper we do it under certain assumption on $G$ formulated in terms of the (unique up to an $F$-isomorphism)
minimal field extension
$E/F$ such that the group $G_E$ is of inner type:
the degree of $E/F$ is assumed to be a power of $p$, where $p=\Char\F$.

Note that this has been already done in \cite{upper}
in the case when $E=F$, that is, when $G$ itself is of inner type.
Therefore, though the inner case is formally included in the present paper,
we are concentrated here on the special effects of the outer case.
This remark explains the choice of the title.

Note that the extension $E/F$ is galois.
Actually, we do not use the minimality condition on the extension $E/F$ in the paper.
Therefore $E/F$ could be any finite $p$-primary galois field extension with $G_E$ of inner type.
However, it is reasonable to keep the minimality condition at least for the sake of the definition of the
{\em set of the upper motives} of $G$ which we give now.

For any intermediate field $L$ of the extension $E/F$ and any projective $G_L$-homogeneous variety $Y$,
we consider the upper (see \cite[Definition 2.10]{upper}) indecomposable summand $M_Y$ of the motive
$M(Y)\in\CM(F,\F)$ of $Y$ (considered as an $F$-variety at this point).
By definition, this is the (unique up to an isomorphism) indecomposable summand of $M(Y)$ with non-zero $0$-codimensional
Chow group.
The set of the isomorphism classes of the motives $M_Y$ for all $L$ and
$Y$ is called the set of {\em upper motives}
of the algebraic group $G$.

The summand $M_Y$ is definitely the ``easiest'' indecomposable summand of $M(Y)$ over which we have the best control.
For instance, the motive $M_Y$ is isomorphic to the motive $M_{Y'}$ for another projective homogeneous (not necessarily
under an action of the same group $G$) variety $Y'$ if and only if there exist multiplicity $1$ correspondences
$Y\corr Y'$ and $Y'\corr Y$, \cite[Corollary 2.15]{upper}.
Here a {\em correspondence} $Y\corr Y'$ is an element of the ($\dim Y$)-dimensional Chow group of
$Y\times_F Y'$ with coefficients in $\F$;
its {\em multiplicity} is its image under the push-forward to the ($\dim Y$)-dimensional Chow group of $Y$ identified with
$\F$.

One more nice property of $M_Y$ (which will be used in the proof of Theorem \ref{main}) is an easy control over the condition
that $M_Y$ is a summand of an arbitrary motive $M$: by \cite[Lemma 2.14]{upper}, this condition holds if and only if
there exist morphisms $\alpha:M(Y)\to M$ and $\beta:M\to M(Y)$ such that the correspondence
$\beta\compose\alpha$ is of multiplicity $1$.

We are going to claim that
the complete motivic decomposition of any projective $G$-homogeneous variety $X$
consists of shifts of upper motives of $G$.
In fact, the information we have is a bit more precise:

\begin{theorem}
\label{main}
For $F$, $G$, $E$, and $X$ as above, the complete motivic decomposition of $X$ consists of shifts of
upper motives of the algebraic group $G$.
More precisely, any indecomposable summand of the motive of $X$ is isomorphic
a shift of an upper motive $M_Y$
such that the Tits index of $G$ over the function field of the variety $Y$
contains the Tits index of $G$ over the function field
of $X$.
\end{theorem}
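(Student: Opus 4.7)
My strategy is to reduce the outer case to the inner case already established in \cite{upper} by extending scalars to $E$ (where $G_E$ becomes of inner type) and then descending through the Galois group $\Gamma := \Gal(E/F)$. Fix an indecomposable direct summand $M$ of $M(X) \in \CM(F,\F)$. The scalar extension $M_E \in \CM(E,\F)$ decomposes by Krull--Schmidt (Corollary~\ref{cor22}) into a finite direct sum of indecomposables, each of which, by the inner-case theorem applied to the inner group $G_E$ and the projective $G_E$-homogeneous variety $X_E$, is isomorphic to a shift of an upper motive $M_{Y'}$ of $G_E$ for some projective $G_E$-homogeneous $Y'$, with the refined Tits-index inclusion over $E$.

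Fix one indecomposable summand $N \cong M_{Y'}(j)$ of $M_E$. Since $M_E$ is defined over $F$, the Galois group $\Gamma$ permutes the isomorphism classes of indecomposable summands of $M_E$; let $H \le \Gamma$ be the stabilizer of the class of $N$ and put $L := E^H$, an intermediate field of $E/F$. The $H$-fixed isomorphism class of $Y'$ descends to a projective $G_L$-homogeneous $L$-variety $Y_0$ whose base change to $E$ is $\bigsqcup_{\sigma \in \Gamma/H} \sigma Y'$; let $Y$ denote $Y_0$ viewed as an $F$-variety. Then $M(Y)_E = \bigoplus_{\sigma \in \Gamma/H} M(\sigma Y')$, and the full $\Gamma$-orbit of $N$ fits as a natural summand of an appropriately shifted $M(Y)_E$.

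To conclude $M \cong M_Y(i)$ for the shift $i$ determined by $j$ and the orbit structure, I invoke Lemma 2.14 of \cite{upper}: it suffices to produce morphisms $\alpha \colon M(Y)(i) \to M$ and $\beta \colon M \to M(Y)(i)$ whose composition $\beta \compose \alpha$ is of multiplicity $1$. Over $E$, multiplicity-$1$ correspondences between $N$ and the summand $M(Y')(j) \subseteq M(Y)_E(i)$ come directly from the upper-summand property of $M_{Y'}$; combining the contributions indexed by the orbit $\Gamma/H$ produces $\Gamma$-invariant cycle classes on $(Y \times_F X)_E$ and $(X \times_F Y)_E$ of the appropriate codimensions, which one then descends to the required $F$-correspondences via the standard comparison of Chow groups under $p$-primary Galois extensions with $\F$-coefficients. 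The Tits-index inclusion over $F$ follows from the corresponding inclusion over $E$ combined with the containment $L \subseteq F(Y)$.

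The principal obstacle is this last step: the Galois descent of the multiplicity-$1$ correspondences from $\CM(E,\F)$ to $\CM(F,\F)$. Since $|\Gamma| = [E:F]$ is a $p$-power and $p = \Char \F$, the order $|\Gamma|$ vanishes in $\F$, so naive averaging is unavailable, and one must exploit the orbit-indexed direct-sum decomposition of $M(Y)_E$ together with careful bookkeeping of cycles to ensure that the constructed $\Gamma$-invariant classes actually descend to cycles over $F$ while preserving the multiplicity-$1$ condition. This is precisely where the $p$-primary assumption on $[E:F]$, together with the matching characteristic of $\F$, is used in an essential way.
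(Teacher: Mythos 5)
There is a genuine gap, and it sits exactly where you place your ``principal obstacle'': the descent of the multiplicity~$1$ correspondences from $\CM(E,\F)$ to $\CM(F,\F)$. You assert that the $\Gamma$-invariant classes you build on $(Y\times_F X)_E$ and $(X\times_F Y)_E$ ``descend to the required $F$-correspondences via the standard comparison of Chow groups under $p$-primary Galois extensions with $\F$-coefficients,'' but no such comparison exists in the form you need. The restriction map $\Ch(Z;\F)\to\Ch(Z_E;\F)^{\Gamma}$ is in general far from surjective; the only generally available tool relating the two sides is the transfer, and for a $\Gamma$-invariant class $x$ it gives $\res_{E/F}\cores_{E/F}(x)=\sum_{\sigma\in\Gamma}\sigma(x)=[E:F]\cdot x=0$, since $[E:F]$ is a power of $p=\Char\F$. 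So invariance of a cycle class over $E$ gives you no handle whatsoever on its $F$-rationality, and ``careful bookkeeping'' of the orbit decomposition cannot replace the missing mechanism: deciding which classes over $E$ (or over an algebraic closure) are defined over $F$ is precisely the hard content of the theorem. In addition, a smaller issue: the stabilizer $H$ of the isomorphism class of $N\simeq M_{Y'}(j)$ need not stabilize the type $\tau'$ of $Y'$ (distinct homogeneous varieties can have isomorphic upper motives), so the descent of $Y'$ to a $G_L$-homogeneous variety over $L=E^H$ also requires an argument.

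The paper avoids Galois descent of cycles altogether and the $p$-primary hypothesis enters in a completely different place. The actual proof runs by induction on $\dim X$: one passes to the function field $\tilde F=F(X)$, decomposes $M(X_{\tilde F})$ via Brosnan's theorem into shifts of motives of projective homogeneous varieties of smaller dimension under the anisotropic kernel $G'$ over intermediate fields $\tilde L$, applies the induction hypothesis there, and then uses the corestriction functor $\cores_{\tilde L/\tilde F}$: Proposition \ref{three} shows that corestriction along a $p$-primary Galois subextension preserves indecomposability, because the endomorphism ring becomes $R\otimes_\F\F\Gamma\simeq R[t]/(t^p)$, which has no nontrivial idempotents -- this is where $p=\Char\F$ and $p$-primarity are really used (and Example \ref{primer} shows this step fails otherwise). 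The multiplicity~$1$ correspondences over $F$ are then produced not by descending invariant classes but by lifting cycles along the surjections onto Chow groups of generic fibers, e.g.\ $\Ch_{\dim Y+i}(Y\times X)\onto\Ch_i(X_{F(Y)})$, with rationality guaranteed because the relevant extensions $\tilde F(Y)/F(Y)$ are purely transcendental (rational points exist by the Tits index conditions). Your plan would need to be rebuilt around some such mechanism; as written, the decisive step is not just unproved but rests on a descent statement that is false in the stated generality.
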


\begin{remark}
Theorem \ref{main} fails
if the degree of the extension $E/F$ is divisible by a prime different from $p$
(see Example \ref{primer}).
\end{remark}

The proof of Theorem \ref{main} is given in \S\ref{Proof of Theorem main}.
Before this, we get some preparation results which are also of independent interest.
In \S\ref{Nilpotence principle for quasi-homogeneous varieties}, we prove the nilpotence principle
for the quasi-homogeneous varieties.
In \S\ref{Motives of $0$-dimensional varieties}, we establish some properties of a motivic
corestriction functor.

By {\em sum} of motives we always mean the {\em direct} sum;
a {\em summand} is a {\em direct} summand;
a direct sum decomposition is called {\em complete} if the summands are indecomposable.

\section
{Nilpotence principle for quasi-homogeneous varieties}
\label
{Nilpotence principle for quasi-homogeneous varieties}

Let us consider the category
$\CM(F,\Lambda)$ of Grothendieck Chow motives over a field $F$ with coefficients in an
{\em arbitrary} associative commutative unital ring $\Lambda$.

We say that a smooth complete $F$-variety $X$ satisfies the nilpotence principle,
if for any $\Lambda$ and any field extension $K/F$, the kernel of the change of field homomorphism
$$
\End\big(M(X)\big)\to\End\big(M(X_K)\big)
$$
consists of nilpotents, where
$M(X)$ stands for the motive of $X$ in $\CM(F,\Lambda)$.

We say that an $F$-variety $X$ is {\em quasi-homogeneous}, if each connected component $X^0$ of $X$
has the following property: there exists a finite separable field extension $L/F$,
a semisimple affine algebraic group $G$ over $L$, and a projective $G$-homogeneous variety $Y$ such
that $Y$, considered as an $F$-variety via the composition
$Y\to\Spec L\to\Spec F$, is isomorphic to $X^0$.
(Note that the algebraic group $G$ needs not to be defined over $F$ in this definition.)

We note that any variety which is {\em projective quasi-homogeneous} in the sense of
\cite[\S4]{MR2178658} is also quasi-homogeneous in the above sense.
The following statement generalizes \cite[Theorem 8.2]{MR2110630} (see also
\cite[Theorem 5.1]{MR2178658}) and \cite[Theorem 25]{MR2264459}:

\begin{theorem}
\label{nilp}
Any quasi-homogeneous variety
satisfies the nilpotence principle.
\end{theorem}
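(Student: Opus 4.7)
The plan is to reduce, by base change to a Galois closure of the extension $L/F$, to the known Rost-type nilpotence for projective homogeneous varieties (\cite[Theorem 25]{MR2264459}), and then to descend back to $F$.

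First I would reduce to the case of a single connected component. For $X=\coprod_i X_i^0$ with $f\in\End M(X)$ becoming zero after base change to some $K$, the matrix decomposition $f=(f_{ij})$ with respect to $M(X)=\bigoplus_i M(X_i^0)$ has each diagonal entry $f_{ii}$ nilpotent (by the connected case applied to $X_i^0$) and, for $i\neq j$, each composition $f_{ji}\circ f_{ij}\in\End M(X_i^0)$ nilpotent (likewise, since it still becomes zero over $K$); a path-counting argument on the entries of $f^N$ then bounds the nilpotence exponent of $f$ itself. So from now on assume $X=Y$ where $Y$ is projective $G$-homogeneous over a finite separable extension $L/F$, viewed as an $F$-variety via $Y\to\Spec L\to\Spec F$.

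Next, let $\tilde L/F$ be a Galois closure of $L/F$. Since $\tilde L\otimes_F L\cong\prod_{\sigma\colon L\hookrightarrow\tilde L}\tilde L$, the base change $Y\times_F\tilde L$ decomposes as a disjoint union $\coprod_\sigma Y^\sigma$ of projective $G^\sigma$-homogeneous varieties over $\tilde L$. Applying \cite[Theorem 25]{MR2264459} termwise, together with the disjoint-union bookkeeping already used, the $\tilde L$-variety $Y\times_F\tilde L$ satisfies the nilpotence principle over $\tilde L$.

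The main obstacle is the descent from $\tilde L$ back to $F$. Given $f\in\End_F M(Y)$ with $f_K=0$, picking any field factor $K'$ of the separable (hence reduced) ring $K\otimes_F\tilde L$ yields $f_{K'}=0$; since $K'/\tilde L$ is a field extension, the nilpotence of $Y\times_F\tilde L$ over $\tilde L$ forces $(f^n)_{\tilde L}=0$ for some $n\geq 1$. It remains to conclude that $f^n$ itself is nilpotent over $F$. A naive pushforward along the finite étale map $(Y\times_F Y)_{\tilde L}\to Y\times_F Y$ produces only $[\tilde L:F]\cdot f^n=0$, which is void precisely when $[\tilde L:F]$ is a zero divisor in $\Lambda$, the case of primary interest. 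The genuine descent must therefore exploit the $\Gal(\tilde L/F)$-invariance of $f^n$ as a correspondence on $Y\times_F Y$, combined with the cellular structure of each $Y^\sigma$ over a splitting field of $G^\sigma$, via a Chernousov--Gille--Merkurjev-style argument adapted to the Galois-invariant setting.
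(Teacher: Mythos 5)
Your argument has a genuine gap, and it sits exactly at the point you flag yourself: the descent from $\tilde L$ back to $F$. Everything up to that point (reduction to a connected component, base change to a Galois closure, splitting $Y\times_F\tilde L$ into projective homogeneous $\tilde L$-varieties and applying \cite[Theorem 25]{MR2264459} there) only shows that $f_{\tilde L}$ is nilpotent, i.e.\ that $(f^n)_{\tilde L}=0$ for some $n$. But the theorem is a statement about the kernel of $\End\big(M(Y)\big)\to\End\big(M(Y_K)\big)$ over $F$, and the variety $Y$ is \emph{not} homogeneous over $F$; so the remaining claim --- that the kernel of $\res_{\tilde L/F}$ on $\End\big(M(Y)\big)$ consists of nilpotents --- is not a formality but is essentially the whole content of the theorem beyond the known homogeneous case. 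As you correctly observe, the transfer argument only yields $[\tilde L:F]\cdot f^n=0$, which is vacuous precisely in the case of interest (e.g.\ $\Lambda=\F$ with $p$ dividing $[\tilde L:F]$), and the sentence ``the genuine descent must therefore exploit the $\Gal(\tilde L/F)$-invariance \dots via a Chernousov--Gille--Merkurjev-style argument adapted to the Galois-invariant setting'' is a statement of intent, not a proof. It is also not clear that Galois-invariance of $f^n$ as a cycle on $(Y\times_F Y)_{\tilde L}$ can recover $f^n$ itself without inverting the degree: Galois descent for Chow groups with finite coefficients fails in exactly this way, so the missing step cannot be supplied by a routine invariance argument.

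For comparison, the paper avoids any such descent: it invokes the general criterion \cite[Theorem 92.4]{EKM}, which reduces the nilpotence principle to checking that quasi-homogeneous varieties form a \emph{tractable class}. Properties (1)--(3) of a tractable class are trivial for this class, and the only substantive input is property (4), namely that an isotropic member has motive isomorphic to that of a lower-dimensional member; this is \cite[Theorem 7.2]{MR2110630}, which produces lower-dimensional \emph{quasi}-homogeneous (not necessarily homogeneous) pieces --- which is precisely why the class must be enlarged from homogeneous to quasi-homogeneous varieties, and why the induction machinery of \cite{EKM}, valid for arbitrary coefficient rings with no invertibility hypotheses, replaces the transfer/descent step that your proposal cannot complete. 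If you want to salvage your route, you would in effect have to reprove that machinery in your ``Galois-invariant setting''; the more economical fix is to argue as the paper does.
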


\begin{proof}
By \cite[Theorem 92.4]{EKM} it suffices to show that the quasi-homogeneous varieties form
a {\em tractable class}.
We first recall the definition of a tractable class $\mathcal{C}$ (over $F$).
This is a disjoint union of classes $\mathcal{C}_K$ of smooth complete $K$-varieties,
where $K$ runs over {\em all} field extensions of $F$,
having the following properties:
\begin{enumerate}
\item
if $Y_1$ and $Y_2$ are in $\mathcal{C}_K$ for some $K$, then the disjoint union
of $Y_1$ and $Y_2$ is also in $\mathcal{C}_K$;
\item
if $Y$ is in $\mathcal{C}_K$ for some $K$, then each component of $Y$
is also in $\mathcal{C}_K$;
\item
if $Y$ is in $\mathcal{C}_K$ for some $K$, then for any field extension $K'/K$
the $K'$-variety $Y_{K'}$ is in $\mathcal{C}_{K'}$;
\item
if $Y$ is in $\mathcal{C}_K$ for some $K$, $Y$ is irreducible, $\dim Y>0$, and $Y(K)\ne\emptyset$,
then $\mathcal{C}_K$ contains a (not necessarily connected) variety $Y_0$ such that $\dim Y_0<\dim Y$
and $M(Y)\simeq M(Y_0)$ in  $\CM(K,\Lambda)$
(for any $\Lambda$ or, equivalently, for $\Lambda=\Z$).
\end{enumerate}

Let us define a class $\mathcal{C}$ as follows.
For any field extension $K/F$, $\mathcal{C}_K$ is the class of all
quasi-homogeneous $K$-varieties.

We claim that the class $\mathcal{C}$ is tractable.
Indeed, the properties $(1)$--$(3)$ are trivial and
the property $(4)$ is \cite[Theorem 7.2]{MR2110630}.
\qed
\end{proof}

We turn back to the case where the coefficient ring $\Lambda$ is a finite field $\F$.

\begin{corollary}
\label{cor22}
Let $M\in\CM(F,\F)$ be a summand of the motive of a quasi-homo\-geneous variety.
Then $M$ decomposes in a finite direct sum of indecomposable motives;
moreover, such a decomposition is unique (up to a permutation of the summands).
\end{corollary}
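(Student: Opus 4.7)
The strategy is to prove that the endomorphism ring $\End(M)$ is semi-perfect, after which the classical Krull--Schmidt theorem immediately yields the existence and uniqueness (up to permutation) of a decomposition of $M$ into finitely many indecomposable summands.

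To establish semi-perfectness, first pass to a field extension over which the motive splits. Let $X$ be a quasi-homogeneous $F$-variety with $M$ a summand of $M(X)$, and take $K/F$ to be an algebraic closure. Each connected component of $X$ is a projective homogeneous variety over some finite separable extension $L/F$ (viewed as an $F$-variety via the structure map); its base change to $K$ decomposes as a disjoint union, indexed by the embeddings $L\hookrightarrow K$, of split projective homogeneous varieties over $K$. By Bruhat decomposition, each of these has a motive that is a finite direct sum of shifts of the Tate motive, so $\End(M(X)_K)$ is a finite-dimensional $\F$-algebra. Since $\F$ is finite, $\End(M(X)_K)$ is a finite ring, and so is its corner $\End(M_K)$.

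Next, invoke Theorem \ref{nilp}: the kernel $I$ of the base change homomorphism $\End(M)\to\End(M_K)$ consists of nilpotent elements (the nilpotence principle for $M$ follows from that for $M(X)$ by extending endomorphisms of $M$ by zero on a complementary summand), so $I$ is a nil ideal. The quotient $\End(M)/I$ embeds into the finite ring $\End(M_K)$, so it is itself finite and hence semi-perfect. Since a nil ideal lies in the Jacobson radical, the ring $\End(M)/J(\End(M))$ is a semisimple Artinian quotient of $\End(M)/I$, and idempotents can be lifted from $\End(M)/I$ to $\End(M)$ by the standard lemma on lifting modulo nil ideals. Thus $\End(M)$ itself is semi-perfect, and the Krull--Schmidt theorem finishes the proof.

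The central technical step is the passage from the finiteness of the endomorphism ring after base change to the semi-perfectness of $\End(M)$, which is precisely where Theorem \ref{nilp} is used; the rest is either standard ring theory (lifting of idempotents modulo a nil ideal, Krull--Schmidt for semi-perfect rings) or standard geometry (splitting of projective homogeneous varieties, and hence of their motives into Tate summands, over an algebraic closure).
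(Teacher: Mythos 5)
Your proof is correct and takes essentially the same route as the paper: the paper observes that a quasi-homogeneous variety is geometrically cellular (hence geometrically split) and satisfies the nilpotence principle by Theorem \ref{nilp}, and then concludes by citing Corollary 2.6 of \cite{upper}. Your base-change-to-the-algebraic-closure, nil-kernel, semi-perfectness and Krull--Schmidt argument is in effect a self-contained proof of that cited corollary from the same two inputs, so the two proofs differ only in that you unpack the black box rather than cite it.
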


\begin{proof}
Any quasi-homogeneous variety is {\em geometrically cellular}.
In particular, it is {\em geometrically split} in the sense of \cite[\S2a]{upper}.
Finally, by Theorem \ref{nilp}, it satisfies the nilpotence principle.
The statement under proof follows now by \cite[Corollary 2.6]{upper}.
\qed
\end{proof}

\section
{Corestriction of scalars for motives}
\label{Motives of $0$-dimensional varieties}

As in the previous section, let $\Lambda$ be an arbitrary (coefficient) ring. We write $\Ch$ for the Chow group with coefficients in $\Lambda$.
Let $\C(F,\Lambda)$ be the category whose objects are pairs $(X,i)$, where
$X$ is a smooth complete equidimensional $F$-variety and $i$ is an integer.
A morphism $(X,i)\to(Y,j)$ in this category is an element of the Chow group
$\Ch_{\dim X+i-j}(X\times Y)$ (and the composition is the usual composition of correspondences).
The category $\C(F,\Lambda)$ is preadditive.
Taking first the additive completion of it, and taking then the idempotent completion of the
resulting category, one gets the category of motives $\CM(F,\Lambda)$, cf. \cite[\S63 and \S64]{EKM}.

Let $L/F$ be a finite separable field extension.
We define a functor
$$
\cores_{L/F}:\C(L,\Lambda)\to\C(F,\Lambda)
$$
as follows:
on the objects $\cores_{L/F}(X,i)=(X,i)$, where on the right-hand side $X$ is considered as an $F$-variety
via the composition $X\to\Spec L\to\Spec F$;
on the morphisms, the map
$$
\Hom_{\C(L,\Lambda)}\big((X,i),(Y,j)\big)\to\Hom_{\C(F,\Lambda)}\big((X,i),(Y,j)\big)
$$
is the push-forward homomorphism
$\Ch_{\dim X+i-j}(X\times_L Y)\to\Ch_{\dim X+i-j}(X\times_F Y)$
with respect to the closed imbedding
$X\times_LY\hookrightarrow X\times_FY$.
Passing to the additive completion and then to the idempotent completion, we get
an additive and commuting with the Tate shift functor
$\CM(L,\Lambda)\to\CM(F,\Lambda)$, which we also denote by $\cores_{L/F}$.

The functor $\cores_{L/F}:\C(L,\Lambda)\to\C(F,\Lambda)$
is left-adjoint and right-adjoint to the change of field functor
$\res_{L/F}:\C(F,\Lambda)\to\C(L,\Lambda)$, associating to $(X,i)$
the object $(X_L,i)$.
Therefore the functor $\cores_{L/F}:\CM(L,\Lambda)\to\CM(F,\Lambda)$
is also left-adjoint and right-adjoint to the change of field functor
$\res_{L/F}:\CM(F,\Lambda)\to\CM(L,\Lambda)$.
(This makes a funny difference with the category of varieties, where the functor $\cores_{L/F}$
is only left-adjoint to $\res_{L/F}$, while the right-adjoint to $\res_{L/F}$ functor is the
Weil transfer.)
It follows that for any $M\in\CM(L,\Lambda)$ and any $i\in\Z$, the Chow groups
$\Ch^i(M)$ and $\Ch^i(\cores_{L/F}M)$ are canonically isomorphic as well as the
Chow groups
$\Ch_i(M)$ and $\Ch_i(\cores_{L/F}M)$ are.
Indeed, since $\res_{L/F}\Lambda=\Lambda\in\CM(L,\Lambda)$, we have
\begin{align*}
&\Ch^i(M):=\Hom\big(M,\Lambda(i)\big)=
\Hom\big(\cores_{L/F}M,\Lambda(i)\big)
=:\Ch^i(\cores_{L/F}M)
&\text{ and}
\\
&\Ch_i(M):=\Hom\big(\Lambda(i),M\big)
=\Hom\big(\Lambda(i),\cores_{L/F}M\big)
=:\Ch_i(\cores_{L/F}M).
&
\end{align*}

In particular, if the ring $\Lambda$ is connected and $M\in\CM(L,\Lambda)$
is an {\em upper} (see \cite[Definition 2.10]{upper} or \S\ref{Introduction} here)
motivic summand of an irreducible smooth complete $L$-variety
$X$, then $\cores_{L/F}M$ is an upper motivic summand of the $F$-variety $X$.

Now we turn back to the situation where $\Lambda$ is a finite field $\F$:

\begin{proposition}
\label{three}
The following three conditions on a finite galois field extension $E/F$ are equivalent:
\begin{enumerate}
\item[$(1)$]
for any intermediate field $F\subset K\subset E$, the $K$-motive of $Spec E$ is indecomposable;
\item[$(2)$]
for any intermediate fields $F\subset K\subset L\subset E$ and any indecomposable $L$-motive $M$, the $K$-motive
$\cores_{L/K}(M)$ is indecomposable;
\item[$(3)$]
the degree of $E/F$ is a power of $p$ (where $p$ is the characteristic of the coefficient field $\F$).
\end{enumerate}
\end{proposition}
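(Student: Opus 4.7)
The plan is to prove the chain $(2)\Rightarrow(1)\Rightarrow(3)\Rightarrow(2)$; only the last direction requires real work. For $(2)\Rightarrow(1)$, I specialise to $L=E$ and $M=\F\in\CM(E,\F)$, the unit motive, which is indecomposable since its endomorphism ring is $\F$; then $\cores_{E/K}(\F)$ is by construction the $K$-motive of $\Spec E$, so (1) follows from (2). For $(1)\Rightarrow(3)$ I argue by contraposition: if a prime $q\ne p$ divides $[E:F]$, Cauchy's theorem applied to $\Gal(E/F)$ yields a subgroup of order $q$, whose fixed field $K$ makes $E/K$ cyclic of degree $q$. The splitting $E\otimes_K E\cong\prod_{\Gal(E/K)}E$ then identifies $\End_K(M(\Spec E))$ with $\F[\Gal(E/K)]\cong\F[x]/(x^q-1)$, which factors as $\F\times\F[x]/\Phi_q(x)$ because $q$ is invertible in $\F$; the resulting nontrivial idempotent contradicts (1).

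For $(3)\Rightarrow(2)$ the first step is to reduce to $[L:K]=p$ with $L/K$ Galois. Passing to the Galois closure $L'$ of $L/K$ inside $E$, the group $\Gal(L'/K)$ is a $p$-group, so I can refine the inclusion $\Gal(L'/L)\subset\Gal(L'/K)$ to a subnormal chain with every successive quotient cyclic of order $p$ (an index-$p$ subgroup of a $p$-group is automatically normal). Taking fixed fields produces a tower $K=K_0\subset K_1\subset\cdots\subset K_n=L$ with each $K_i/K_{i-1}$ Galois of degree $p$; since $\cores_{L/K}$ factors as $\cores_{K_1/K_0}\circ\cdots\circ\cores_{K_n/K_{n-1}}$, it suffices to handle one degree-$p$ step.

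So assume $[L:K]=p$ and $L/K$ is Galois with group $H$. The key input is the Mackey-type isomorphism $\res_{L/K}\cores_{L/K}M\cong\bigoplus_{\sigma\in H}\sigma^*M$ coming from the splitting $L\otimes_KL\cong\prod_{\sigma\in H}L$; combined with the $(\cores,\res)$-adjunction it yields
\[
\End_K(\cores_{L/K}M)\cong\bigoplus_{\sigma\in H}\Hom_L(M,\sigma^*M).
\]
I then split into two cases. If the motives $\sigma^*M$ are pairwise non-isomorphic, then by Corollary~\ref{cor22} any decomposition of $\cores_{L/K}M$ restricts to a decomposition of $\bigoplus_\sigma\sigma^*M$ with each summand $H$-stable; but $H$ acts transitively on the $p$ pairwise non-isomorphic indecomposable summands, so one restricted summand must vanish, and the nilpotence principle (Theorem~\ref{nilp}) --- available because $\cores_{L/K}M$ is a summand of a quasi-homogeneous $K$-variety --- promotes this vanishing to a vanishing over $K$. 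If instead $M\cong\sigma^*M$ for every $\sigma$, the same nilpotence principle reduces the problem to $H$-invariant idempotents in $\End_L(\res_{L/K}\cores_{L/K}M)\cong M_p(\End_LM)$; the invariant subring is a ``circulant'' crossed product of the local ring $\End_LM$ by the cyclic $p$-group $H$, whose reduction modulo the Jacobson radical is a (possibly twisted) group algebra of $H$ over a finite field of characteristic $p$, hence local.

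The main obstacle I anticipate is precisely this last locality statement: if the residue field of $\End_LM$ is a proper extension of $\F$ on which $H$ acts nontrivially through outer automorphisms of $\End_LM$, the twisted crossed product is not obviously local. Verifying locality will require careful analysis of the twisted structure, ultimately exploiting that $|H|=p=\Char\F$ to force the relevant action-cocycle data to become trivial modulo the radical.
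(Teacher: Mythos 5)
Your handling of $(2)\Rightarrow(1)$ and $(1)\Rightarrow(3)$ is correct and coincides with the paper's argument, as does the reduction of $(3)\Rightarrow(2)$ to a single Galois step of degree $p$. The degree-$p$ step itself, however, is not proved. First, in your case of pairwise non-isomorphic conjugates you invoke Corollary \ref{cor22} and Theorem \ref{nilp}; these are established only for summands of motives of quasi-homogeneous varieties, whereas the $M$ in condition $(2)$ is an arbitrary indecomposable $L$-motive (a summand of the motive of an arbitrary smooth complete $L$-variety), so neither Krull--Schmidt nor the nilpotence principle is available, and your parenthetical claim that $\cores_{L/K}M$ is a summand of the motive of a quasi-homogeneous $K$-variety has no basis. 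Second, and more seriously, the locality statement you flag at the end is not a removable technicality: if the residue field of the local ring $\End_L(M)$ is a proper extension of $\F$ on which $H$ acts nontrivially, the quotient you would need to be local is a cyclic crossed product of finite fields, and every such crossed product is a full matrix algebra since the Brauer group of a finite field is trivial; already for $p=2$, the crossed product of $\F_4$ by $\Z/2$ acting through the Frobenius (with trivial cocycle) is $M_2(\F_2)$, which is full of idempotents. So no massaging of the cocycle (which is indeed always trivializable here) can force locality; what is missing is an argument excluding this case for the motives at hand, e.g.\ showing that $\End_L(M)$ has residue field $\F$ itself --- and the case is not vacuous, since indecomposable motives with endomorphism ring a proper field extension of $\F$ do exist (for $p=2$, the non-unit summand of the motive of a cubic \'etale algebra has endomorphism ring $\F_4$). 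Nothing in your sketch supplies this input.

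For comparison, the paper's proof never meets your obstacle because it asserts outright that $\End\bigl(\cores_{L/K}M\bigr)\simeq \End(M)\otimes_\F\End\bigl(M_K(\Spec L)\bigr)\simeq R[t]/(t^p)$, quoting \cite[\S7]{MR2264459} for $\End\bigl(M_K(\Spec L)\bigr)\simeq\F\Gamma$, and then verifies by a short computation that $R[t]/(t^p)$ has no nontrivial idempotents when $R$ has none; there is no case distinction and no use of nilpotence or Krull--Schmidt. Your Mackey-type formula $\End_K(\cores_{L/K}M)\simeq\bigoplus_{\sigma\in H}\Hom_L(\sigma^*M,M)$ is the correct general shape of this endomorphism ring, and the discrepancy between it and the untwisted tensor description is precisely the difficulty you identified; to complete your argument along these lines you would have to justify the untwisted description for the motives actually considered, or restrict the class of $M$ (as in Corollary \ref{upper v upper}, where $M$ is an upper summand, which is all the paper uses afterwards). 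As it stands, your proof of $(3)\Rightarrow(2)$ is incomplete.
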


\begin{proof}
We start by showing that $(3)\Rightarrow(2)$.
So, we assume that $[E:F]$ is a power of $p$ and we prove $(2)$.
The extension $L/K$ decomposes in a finite chain of galois degree $p$ extensions.
Therefore we may assume that $L/K$ itself is a galois degree $p$ extension.
Let $R=\End(M)$.
This is an associative, unital, but not necessarily commutative $\F$-algebra.
Moreover, since $M$ is indecomposable, the ring $R$ has no non-trivial idempotents.
We have $\End\big(\cores_{L/K}(M)\big)=R\otimes_\F\End\big(M_K(\Spec L)\big)$
where $M_K(\Spec L)\in\CM(K,\F)$ is the motive of the $K$-variety $\Spec L$.
According to \cite[\S7]{MR2264459}, the ring $\End\big(M_K(\Spec L)\big)$ is isomorphic
to the group ring $\F\Gamma$, where $\Gamma$ is the Galois group of $L/K$.
Since the group $\Gamma$ is (cyclic) of order $p$, we have $\F\Gamma\simeq\F[t]/(t^p-1)$.
Since $p=\Char \F$, $\F[t]/(t^p-1)\simeq\F[t]/(t^p)$.
It follows that the ring $\End\big(\cores_{L/K}(M)\big)$ is isomorphic to the ring $R[t]/(t^p)$.
We prove $(2)$ by showing that the latter ring does not contain non-trivial idempotents.
An arbitrary element of $R[t]/(t^p)$ can be (and in a unique way) written in the form $a+b$,
where $a\in R$ and $b$ is an element of $R[t]/(t^p)$ divisible by the class of $t$.
Note that $b$ is nilpotent.
Let us take an idempotent of $R[t]/(t^p)$ and write it in the above form $a+b$.
Then $a$ is an idempotent of $R$.
Therefore $a=1$ or $a=0$.
If $a=1$, then $a+b$ is invertible and therefore $a+b=1$.
If $a=0$, then $a+b$ is nilpotent and therefore $a+b=0$.

We have proved the implication $(3)\Rightarrow(2)$.
The implication $(2)\Rightarrow(1)$ is trivial.
We finish by proving that $(1)\Rightarrow(3)$.

We assume that $[E:F]$ is divisible by a different from $p$ prime $q$ and we show that $(1)$ does not hold.
Indeed, the galois group of $E/F$ contains an element $\sigma$ of order $q$.
Let $K$ be the subfield of $E$ consisting of the elements of $E$ fixed by $\sigma$.
We have $F\subset K\subset E$ and $E/K$ is galois of degree $q$.
The endomorphisms ring of $M_K(\Spec E)$ is isomorphic to $\F[t]/(t^q-1)$.
Since $q\ne\Char\F$, the factors of the decomposition
$t^q-1=(t-1)\cdot(t^{q-1}+t^{q-2}+\dots+1)\in\F[t]$ are coprime.
Therefore the ring $F[t]/(t^q-1)$ is the direct product of the rings $\F[t]/(t-1)=\F$ and $\F[t]/(t^{q-1}+\dots+1)$,
and it follows that  the motive $M_K(\Spec E)$ is not indecomposable.
\qed
\end{proof}

\begin{corollary}
\label{upper v upper}
Let $E/F$ be a finite $p$-primary galois field extension and let $L$ be an intermediate field:
$F\subset L\subset E$.
Let $M\in\CM(L,\F)$ be an upper indecomposable motivic summand of an irreducible smooth complete $L$-variety
$X$.
Then $\cores_{L/F}M$ is an upper indecomposable summand of the $F$-variety $X$.
\qed
\end{corollary}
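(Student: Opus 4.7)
The proof is essentially an assembly of pieces already prepared in the preceding discussion, and there is no single serious obstacle; the plan is just to verify that $\cores_{L/F}M$ meets the three defining requirements of an upper indecomposable summand, namely that it is a summand of $M_F(X)$, that it is indecomposable, and that its $0$-codimensional Chow group is non-zero.

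First I would note that $\cores_{L/F}M$ is a summand of the $F$-motive of $X$. Indeed, by the very definition of $\cores_{L/F}$ on objects, $\cores_{L/F}M_L(X)$ is the motive $M_F(X)$ of $X$ viewed as an $F$-variety through $X\to\Spec L\to\Spec F$; since the additive functor $\cores_{L/F}$ preserves direct summands, and $M$ is a summand of $M_L(X)$, the motive $\cores_{L/F}M$ is a summand of $M_F(X)$.

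Next I would read off indecomposability of $\cores_{L/F}M$ directly from Proposition \ref{three}. The hypothesis that $E/F$ is $p$-primary galois is exactly condition $(3)$ of that proposition, hence condition $(2)$ also holds. Applying $(2)$ with $K=F$ to the indecomposable $L$-motive $M$ yields that $\cores_{L/F}M$ is indecomposable in $\CM(F,\F)$.

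Finally I would check non-vanishing of $\Ch^0(\cores_{L/F}M)$ using the adjunction identities established in the paragraph preceding Proposition \ref{three}: one has canonically
$$
\Ch^0(\cores_{L/F}M)\;\cong\;\Ch^0(M),
$$
and the right-hand side is non-zero because $M$ is upper in $\CM(L,\F)$. Combining the three observations, $\cores_{L/F}M$ is an indecomposable summand of $M_F(X)$ whose $0$-codimensional Chow group is non-zero; by the uniqueness of such a summand (guaranteed by Corollary \ref{cor22} together with the definition of upper summand), $\cores_{L/F}M$ is the upper indecomposable summand of the $F$-variety $X$, which is the claim.
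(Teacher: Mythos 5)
Your argument is correct and is essentially the paper's own (implicit) proof: the summand property and the isomorphism $\Ch^0(\cores_{L/F}M)\cong\Ch^0(M)$ come from the adjunction discussion preceding Proposition \ref{three}, and indecomposability is exactly the implication $(3)\Rightarrow(2)$ of that proposition applied with $K=F$. Only your closing appeal to Corollary \ref{cor22} is superfluous (and not literally available for an arbitrary irreducible smooth complete $X$, which need not be quasi-homogeneous): being upper in the sense of \cite[Definition 2.10]{upper} only requires being a summand with non-zero $0$-codimensional Chow group, which you have already established.
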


\begin{example}
\label{primer}
Let $X$ be a projective quadric given by an isotropic non-degenerate $4$-dimensional
quadratic form of non-trivial discriminant.
The variety $X$ is projective homogeneous under the action of the orthogonal group of the quadratic form.
This group is outer and the corresponding field extension $E/F$ of this group is the quadratic extension
given by the discriminant of the quadratic form.
The motive of $X$ contains a shift of the motive $M(\Spec E)$.

Now let us assume that the characteristic $p$ of the coefficient field $\F$ is odd.
Then $M(\Spec E)$ decomposes into a sum of two indecomposable summands.
The (total) Chow group of one of these two summands is $0$.
In particular, this summand is not an upper motive of $G$ (because the Chow group of an upper motive is non-trivial
by the very definition of upper).
Therefore Theorem \ref{main} fails without the hypothesis that the extension $E/F$ is $p$-primary.
\end{example}

\section
{Proof of Theorem \ref{main}}
\label{Proof of Theorem main}

Before starting the proof of Theorem \ref{main},
let us recall some classical facts and introduce some notation.

We write
$\D$
(or $\D_G$) for the set
of vertices of the Dynkin diagram of $G$.
The absolute galois group $\Gamma_F$ of the field $F$ acts on $\D$.
The subgroup $\Gamma_E\subset\Gamma_F$ is the kernel of the action.

Let $L$ be a field extension of $F$.
The set $\D_{G_L}$ is identified with $\D=\D_G$.
The action of $\Gamma_L$ on $\D$ is trivial if and only if the group $G_L$ is of inner type.
Any $\Gamma_L$-stable subset $\tau$ in $\D$ determines a projective $G_L$-homogeneous
variety $X_{\tau, G_L}$ in the way described in \cite[\S3]{upper}.
This is the variety corresponding to the set $\D\setminus\tau$ in the sense of \cite{MR0224710}.
For instance, $X_{\D,G_L}$ is the variety of the Borel subgroups of $G_L$, and $X_{\emptyset,G_L}=\Spec L$.
Any projective $G_L$-homogeneous variety is $G_L$-isomorphic to $X_{\tau,G_L}$ for some
$\Gamma_L$-stable $\tau\subset \D$.

If the extension $L/F$ is finite separable,
we write $M_{\tau,G_L}$ for the upper indecomposable motivic summand of the $F$-variety $X_{\tau,G_L}$
(where $\tau$ is a $\Gamma_L$-stable subset in $\D$).
If $L\subset E$, the isomorphism class of $M_{\tau,G_L}$ is an {\em upper motive of $G$}.


For any field extension $L/F$,
the set $\D_{G'}$, attached to the semisimple anisotropic kernel $G'$ of $G_L$,
is identified with a ($\Gamma_L$-invariant) subset in $\D$.
We write $\tau_L$ (or $\tau_{L,G}$) for its complement.
The subset $\tau_L\subset\D$ is (the set of circled vertices of) the Tits index
of $G_L$ defined in \cite{MR0224710}.
For any $\Gamma_L$-stable subset $\tau\subset\D$, the variety $X_{\tau,G_L}$ has a rational point if and only if
$\tau\subset\tau_L$.

\begin{proof}[of Theorem \ref{main}]
This is a recast of
\cite[proof of Theorem 3.5]{upper}.

We proof  Theorem \ref{main}
simultaneously for all $F,G,X$ using an induction on $n=\dim X$.
The base of the induction is $n=0$ where $X=\Spec F$ and the statement is trivial.

From now on we are assuming that $n\geq1$ and that Theorem
\ref{main} is already proven for all varieties of dimension $<n$.

For any field extension $L/F$, we write $\tilde{L}$ for the function field $L(X)$.

Let $M$ be an indecomposable summand of $M(X)$.
We have to show that $M$ is isomorphic to a shift of $M_{\tau,G_L}$ for some
intermediate field $L$ of $E/F$ and some $\Gal(E/L)$-stable subset
$\tau\subset \D_G$ containing $\tau_{\tilde{F}}$.

Let $G'/\tilde{F}$ be the semisimple anisotropic kernel of the group $G_{\tilde{F}}$.
The set $\D_{G'}$ is identified with $\D_G\setminus\tau_{\tilde{F},G}$.

We note that the group $G'_{\tilde{E}}$ is of inner type.
The field extension $\tilde{E}/\tilde{F}$ is galois with the galois group $\Gal(E/F)$.
In particular, its degree is a power of $p$ and
any its intermediate field is of the form $\tilde{L}$ for some intermediate field
$L$ of the extension $E/F$.

According to \cite[Theorem 4.2]{MR2178658},
the motive of $X_{\tilde{F}}$ decomposes into a sum of shifts of motives
of projective $G'_{\tilde{L}}$-homogeneous (where $L$ runs over intermediate fields of
the extension $E/F$) varieties $Y$,
satisfying $\dim Y<\dim X=n$ (we are using the assumption that $n>0$ here).
It follows by the induction hypothesis and
Corollary \ref{upper v upper},
that each summand of the complete motivic decomposition of $X_{\tilde{F}}$ is a shift of
$M_{\tau',G'_{\tilde{L}}}$ for some $L$ and some $\tau'\subset\D_{G'}$.
By Corollary \ref{cor22},
the complete decomposition of $M_{\tilde{F}}$
also consists of shifts of $M_{\tau',G'_{\tilde{L}}}$.

Let us choose a summand $M_{\tau',G'_{\tilde{L}}}(i)$ with minimal $i$ in the complete decomposition of
$M_{\tilde{F}}$.
We set $\tau=\tau'\cup\tau_{\tilde{F}}\subset\cT_G$.
We shall show that $M\simeq M_{\tau,G_L}(i)$
for these $L$, $\tau$, and $i$.

\medskip
We write $Y$ for the $F$-variety $X_{\tau,G_L}$ and we write
$Y'$ for the $\tilde{F}$-variety $X_{\tau',G'_{\tilde{L}}}$.
We write $N$ for the $F$-motive $M_{\tau,G_L}$ and we write
$N'$ for the $\tilde{F}$-motive $M_{\tau',G'_{\tilde{L}}}$.

By \cite[Lemma 2.14]{upper} (also formulated in \S\ref{Introduction} here)
and since $M$ is indecomposable,
it suffices to construct morphisms
$$
\alpha:M(Y)(i)\to M\;\;\text{and}\;\;
\beta:M\to M(Y)(i)
$$
satisfying $\mult(\beta\compose\alpha)=1$,
where $\mult(\beta\compose\alpha)$ is the {\em multiplicity}, defined in \S\ref{Introduction},
of the correspondence  $(\beta\compose\alpha)\in\Ch_{\dim Y}(Y\times_F Y)$.

We construct $\alpha$ first.
Since $\tau'\subset\tau$, the $\tilde{F}(Y)$-variety $Y'\times_{\tilde{L}}\Spec \tilde{F}(Y)$
has a rational point.
Let $\alpha_1\in\Ch_0\big(Y'\times_{\tilde{L}}\Spec \tilde{F}(Y)\big)$ be the class of a rational point.
Let $\alpha_2\in\Ch_i(X_{\tilde{F}(Y)})$ be the image of $\alpha_1$ under the composition
$$
\Ch_0\big(Y'\times_{\tilde{L}}\Spec \tilde{F}(Y)\big)\to
\Ch_0(Y'_{\tilde{F}(Y)})\to\Ch_0(N'_{\tilde{F}(Y)})\hookrightarrow\Ch_i(X_{\tilde{F}(Y)}),
$$
where the first map is the push-forward with respect to the closed imbedding
$$
Y'\times_{\tilde{L}}\Spec \tilde{F}(Y)\hookrightarrow Y'_{\tilde{F}(Y)}:=Y'\times_{\tilde{F}}\Spec \tilde{F}(Y).
$$
Since $\tau_{\tilde{F}}\subset\tau$, the variety $X$ has an $F(Y)$-point
and therefore the field extension $\tilde{F}(Y)/F(Y)$ is purely transcendental.
Consequently, the element $\alpha_2$ is $F(Y)$-rational and
lifts to an element $\alpha_3\in\Ch_{\dim Y+i}(Y\times X)$.
We mean here a lifting with respect to the composition
$$
\begin{CD}
\Ch_{\dim Y+i}(Y\times X) \onto \Ch_i(X_{F(Y)}) @>{\res_{\tilde{F}(Y)/F(Y)}}>> \Ch_i(X_{\tilde{F}(Y)})
\end{CD}
$$
where the first map is the epimorphism given by the pull-back with respect to the morphism
$X_{F(Y)}\to Y\times X$ induced by the generic point of the variety $Y$.

We define the morphism $\alpha$ as the composition
$$
\begin{CD}
M(Y)(i)@>{\alpha_3}>> M(X)@>>> M
\end{CD}
$$
where the second map is the projection of $M(X)$ onto its summand $M$.

We proceed by constructing $\beta$.
Let $\beta_1\in\Ch_{\dim Y'}(Y'\times_{\tilde{F}} Y_{\tilde{F}})$ be the class of
the closure of the graph of a rational map of $\tilde{L}$-varieties
$Y'\RatM Y_{\tilde{F}}$ (which exists because $\tau\subset\tau_{\tilde{F}}\cup \tau'$).
Note that this graph is a subset of $Y'\times_{\tilde{L}}Y_{\tilde{F}}$, which we consider as a subset
of $Y'\times_{\tilde{F}}Y_{\tilde{F}}$ via the closed imbedding
$Y'\times_{\tilde{L}}Y_{\tilde{F}}\hookrightarrow Y'\times_{\tilde{F}}Y_{\tilde{F}}$.
Let $\beta_2$ be the image of $\beta_1$ under the composition
\begin{multline*}
\Ch^{\dim Y}(Y'\times_{\tilde{F}} Y_{\tilde{F}})=
\Ch^{\dim Y}\big(M(Y')\otimes M(Y_{\tilde{F}})\big)\to
\Ch^{\dim Y}\big(N'\otimes M(Y_{\tilde{F}})\big)\to \\
\Ch^{\dim Y+i}\big(M(X_{\tilde{F}})\otimes M(Y_{\tilde{F}})\big)=
\Ch^{\dim Y+i}\big((X\times Y)_{\tilde{F}}\big)
\end{multline*}
where the first arrow is induced by the projection $M(Y')\to N'$ and the second arrow is induced
by the imbedding $N'(i)\to M(X_{\tilde{F}})$.
The element $\beta_2$ lifts to an element
$$
\beta_3\in\Ch^{\dim Y+i}(X\times X\times Y).
$$
We mean here a lifting with respect to the epimorphism
$$
\begin{CD}
\Ch^{\dim Y+i}(X\times X\times Y) \onto \Ch^{\dim Y+i}\big((X\times Y)_{\tilde{F}}\big)
\end{CD}
$$
given by the pull-back with respect to the morphism
$X\times X\times Y\to (X\times Y)_{\tilde{F}}$ induced by the generic point of the second factor in this
triple direct product.

Let $\pi\in\Ch_{\dim X}(X\times X)$ be the projector defining the
summand $M$ of $M(X)$.
Considering $\beta_3$ as a correspondence from $X$ to $X\times Y$, we define
$$
\beta_4\in\Ch^{\dim Y+i}(X\times X\times Y)
$$
as the
composition $\beta_3\compose\pi$.
We get
$$
\beta_5\in\Ch^{\dim Y+i}(X\times Y)=
\Ch_{\dim X-i}(X\times Y)
$$
as the image
of $\beta_4$ under the pull-back with respect to the diagonal
of $X$.
Finally, we define the morphism $\beta$ as the
composition
$$
\begin{CD}
M@>>>M(X)@>{\beta_5}>> M(Y)(i).
\end{CD}
$$

The verification of the relation
$\mult(\beta\compose\alpha)=1$, finishing the proof,
is similar to that of \cite[proof of Theorem 3.5]{upper}.
Since the multiplicity is not changed under extension of scalars, the computation can be done over
a splitting field of $G$.
A convenient choice is the field $\bar{F}(X)$, where $\bar{F}$ is an algebraic closure of $F$.
\qed
\end{proof}

\begin{remark}
Theorem \ref{main} can be also proven using a weaker result
in place of \cite[Theorem 4.2]{MR2178658}, namely, \cite[Theorem 7.5]{MR2110630}.
\end{remark}

\begin{acknowledgement}
Supported by the Collaborative Research Centre 701 of the Bielefeld University.
\end{acknowledgement}



\end{document}